\documentclass[onefignum,onetabnum]{siamart171218}

\usepackage{lipsum}
\usepackage{amsfonts}
\usepackage{graphicx}
\usepackage{epstopdf}
\usepackage{algorithmic}
\ifpdf
  \DeclareGraphicsExtensions{.eps,.pdf,.png,.jpg}
\else
  \DeclareGraphicsExtensions{.eps}
\fi

\newsiamremark{remark}{Remark}
\newsiamremark{hypothesis}{Hypothesis}
\crefname{hypothesis}{Hypothesis}{Hypotheses}
\newsiamthm{claim}{Claim}

\headers{Improving the efficiency of a Hyperpower method}{A. Cordero, and J. R. Torregrosa}

\title{Improving the efficiency of a Hyperpower method for approximating inverse matrices\thanks{Submitted to the editors 2nd September 2026.
\funding{This research was partially supported by the European Research Council (ERC) via
Horizon Europe Advanced Grant, grant agreement number 101097688 (”PeroSpiker”).}}}

\author{Alicia Cordero\thanks{Instituto de Matemática Multidisciplinar, Universitat Politècnica de València, València, Spain
  (\email{acordero@mat.upv.es}).}
\and Juan R. Torregrosa\thanks{Instituto de Matemática Multidisciplinar, Universitat Politècnica de València, València, Spain
  (\email{jrtorre@mat.upv.es}).}}

\usepackage{amsopn}

\makeatletter
\newcommand*{\addFileDependency}[1]{% argument=file name and extension
  \typeout{(#1)}% latexmk will find this if $recorder=0 (however, in that case, it will ignore #1 if it is a .aux or .pdf file etc and it exists! if it doesn't exist, it will appear in the list of dependents regardless)
  \@addtofilelist{#1}% if you want it to appear in \listfiles, not really necessary and latexmk doesn't use this
  \IfFileExists{#1}{}{\typeout{No file #1.}}% latexmk will find this message if #1 doesn't exist (yet)
}
\makeatother

\begin{document}

\maketitle

% REQUIRED
\begin{abstract}
  In this manuscript, scalar norm-type accelerators are used for first time in order to increase the order of convergence of known iterative methods for solving the matrix equation $X^{-1}-A=0$, while decreasing the number of matrix-matrix products. Specifically, our proposed scheme upgrades the order of convergence in a $50\%$, achieving the sixth-order of convergence with the computational cost of fourth-order Hyperpower method, in terms of matrix-matrix multiplications. In the main results, not only the convergence and its order is proven, but also its efficiency and stability. This new technique provides good numerical results, even compared with existing fourth-, sixth- and eighth-order schemes in large matrices of up to $10^6$ entries, but also promises the opening of a new kind of procedures with good performance and scalability. Finally, an application to digital image restoration is made to show the behavior of the method, compared with existing ones.
\end{abstract}

% REQUIRED
\begin{keywords}
  Matrix inverse, iterative methods, scalar norm-type accelerators, efficiency, stability.
\end{keywords}

% REQUIRED
\begin{AMS}
 15A24, 15A09, 65H10.
\end{AMS}

\section{Introduction}
Computation of matrix inverses is a fundamental task in numerical linear algebra with applications spanning across signal processing, optimization, and modern deep learning. When dealing with large-scale matrices, direct solvers such as Gaussian elimination, QR decomposition, or singular value decomposition (SVD) often encounter performance problems because they are difficult to parallelize efficiently on modern hardware architectures, such as Graphics Processing Units (GPUs).

The most well-known scheme to estimate the inverse of a nonsingular complex matrix $A\in \mathbb{C}^{n\times n}$ is the second-order Newton-Schulz procedure (see \cite{NS}), with iterative expression
\begin{equation}\label{eq_NS}
    X_{k+1}=X_k(2I-AX_k), \quad k=0,1,\ldots,
\end{equation}
where $I$ is the identity matrix of size $n\times n$.

A comprehensive analysis of the stability, convergence, and properties of this and related matrix functions can be found in the text by Higham \cite{Higham}.

Although the classical Newton-Schulz method maps perfectly to the highly parallel capabilities of GPUs, its quadratic convergence rate requires a large number of total iterations until the norm of residual matrix $R_k = I - AX_k$ is small. To accelerate the convergence behavior, the iterative scheme can be generalized into a higher-order family known as the Hyperpower methods \cite{stanimirovic2016hyperpower}. For an arbitrary integer order of convergence $p \geq 2$, the standard Hyperpower method is defined as:
\begin{equation}\label{eq_HP}
    X_{k+1} = X_k \sum_{j=0}^{p-1} R_k^j = X_k(I + R_k + R_k^2 + \dots + R_k^{p-1}), \quad k=0,1,\ldots.
\end{equation}
Mathematically, this family achieves a $p$-th order of convergence. However, evaluating this polynomial in its standard form requires exactly $p$ matrix-matrix products per iteration (including the calculation of the residual matrix $R_k$). This linear growth in computational cost per step significantly degrades the overall computational efficiency for high values of $p$.

Consequently, a major focus of current research is to design alternative formulations or factorizations of the Hyperpower polynomial that try to minimize the number of required matrix-matrix products per iteration while preserving the high order of convergence \cite{soleymani2015class, krstic2018factorizations, kaur2020efficient}. For instance, specific factorizations have achieved fifth-order convergence for estimating Moore-Penrose inverses using only four matrix products \cite{kaur2020efficient}, or higher orders by cleverly grouping terms \cite{krstic2018factorizations}, but with the original Hyperpower method. Reducing the number of matrix products below the standard $p$-bound remains the primary strategy for improving the Ostrowski efficiency index \cite{ostrowski1960solution} of these algorithms.

In this paper, we propose a novel systematic method that improves upon the standard Hyperpower scheme by reducing the number of matrix-matrix products per iteration. Our approach lowers the traditional upper bound of $p$ products for order $p=6$ by using the novel approach of scalar accelerators, which has recently been very successful in the context of nonlinear systems of equations. As far as we now, this is the first time that this kind of technique is used in the context of inverse matrix estimation.

Through rigorous theoretical analysis and numerical experiments, we demonstrate that the proposed method maintains the expected high-order convergence rate while requiring less computational effort per step, making it highly competitive for modern parallel computing environments.

Given a nonlinear system of equations, $F(x)=0$, $F:\Omega\subset \mathbb{R}^n \to \mathbb{R}^n$, Singh, Sharma and Kumar designed in \cite{ssk} the fifth-order iterative method
\begin{eqnarray}\label{metodo_SNL}
    y^{(k)}&=&x^{(k)}-[F'(x^{(k)})]^{-1}F(x^{(k)}), \quad k=0,1,\ldots, \nonumber \\
    x^{(k+1)}&=&y^{(k)}-(1+\nu_k)[F'(y^{(k)})]^{-1}F(y^{(k)}),
\end{eqnarray}
where the first step is the Newton's method, the second one is a damped composition of Newton's scheme and the dumping parameter uses the scalar accelerator $\nu_k=\dfrac{\|F(y^{(k)})\|_2^2}{\|F(x^{(k)})\|_2^2}$.

Usually, the extension of scalar iterative methods on the inverse estimation problem is made by applying the scalar iterative expression to the nonlinear function $f(x)=\frac{1}{x}-a$, being $a$ a non-zero real parameter. In this case, the matrix iterative expression corresponding to the vectorial scheme \eqref{metodo_SNL} is not inverse-free, and we discard it. However, a small modification in the definition of the scalar accelerator $\nu_k$, taking into account the calculated residuals, makes it feasible. So, in the context of estimating the inverse of a complex nonsingular matrix $A$, we adapt the vectorial scheme \eqref{metodo_SNL} as
\begin{eqnarray}\label{metodo_inversa}
    Y_{k}&=&X_{k}\left( 2I-AX_k\right), \quad k=0,1,\ldots, \nonumber \\
    X_{k+1}&=&Y_{k}\left((2+\nu_k)I-(1+\nu_k)AY_{k}\right),
\end{eqnarray}
where $\nu_k=\dfrac{\|I-AY_k\|^2}{\| I-AX_k \|^2}$, and $\|\cdot\|$ is any matrix multiplicative norm. From now on, this scheme is denoted by CTM. Let us remark that, in the numerical section, we use the Frobenius norm.

The computational cost of this scheme per iteration is four matrix-matrix products and two Frobenius norm calculations, due to their lower cost. Taking into account that the best existing schemes of the same order of convergence need at least six matrix products per iteration, the improvement of the efficiency in this proposal is about $16\%$.

To fully understand the convergence of iterative methods, we can analyze them as real discrete dynamical systems. Within this framework, the iterative function is treated as a rational operator. Its stability is evaluated by studying the nature of its fixed points, critical points, and their corresponding basins of attraction. This dynamical approach helps to identify stable behaviors and determine which initial estimations lead to the desired root without entering chaotic regions.

This dynamical perspective has been successfully extended to matrix equations, particularly for estimating matrix inverses. Instead of using classical norm arguments, recent research analyzes the stability of inverse matrix solvers by modeling the iteration as a discrete dynamical system \cite{2024inverse, cordero2023improving}. By evaluating the matrix error or using eigenvalues, researchers construct a rational stability operator \cite{2025new}. Analyzing the stable fixed points, strange fixed points, and critical points of this operator allows the identification of real stability intervals. This technique provides the boundaries for the initial approximation $X_0$, guaranteeing that the matrix iteration converges safely and is robust to numerical perturbations.

The paper is organized as follows. Our main result is in Section \ref{sec:main}, where the convergence of our proposed scheme for estimating inverse matrices and its order of convergence are proven. Afterwards, its efficiency and stability are analyzed in Sections \ref{sec:eff} and \ref{sec:stability}, respectively. Some experimental results on large random matrices and a practical case on image processing appear in Section \ref{sec:experiments}, followed by the final conclusions in Section \ref{sec:conclusions}.

\section{Main results}
\label{sec:main}

In this section, we prove the convergence of the proposed method, establish the hypothesis on the initial estimations used and set the order of convergence.

\begin{theorem}\label{thm:bigthm}
  If $A \in \mathbb{C}^{n \times n}$ is nonsingular, and the initial estimation $X_0 \in \mathbb{C}^{n \times n}$ satisfies $\| I-AX_0 \|\leq \sigma<1$, then the iterative process CTM described in \eqref{metodo_inversa} converges to $A^{-1}$ with order of convergence six.
\end{theorem}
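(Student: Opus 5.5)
The plan is to work entirely with residuals, since both the hypothesis and the accelerator $\nu_k$ are expressed through $R_k=I-AX_k$. \textbf{Step 1} is the residual recursion. From $AY_k=AX_k(2I-AX_k)=I-(I-AX_k)^2$ we get $I-AY_k=R_k^2$, so $\nu_k=\|R_k^2\|^2/\|R_k\|^2$. Write $S_k=R_k^2$, so that $AY_k=I-S_k$. Then
\begin{equation*}
AX_{k+1}=(I-S_k)\bigl((2+\nu_k)I-(1+\nu_k)(I-S_k)\bigr)=(I-S_k)\bigl(I+(1+\nu_k)S_k\bigr),
\end{equation*}
and expanding gives
\begin{equation*}
R_{k+1}=I-AX_{k+1}=(1+\nu_k)R_k^4-\nu_k R_k^2 .
\end{equation*}
All $R_k$ are polynomials in $R_0$ with scalar coefficients, so they commute. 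No commutativity assumption on $A$ or $X_0$ is needed, and the argument works for an arbitrary nonsingular $A$ and any $X_0$ as in the statement.

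\textbf{Step 2} is convergence. Submultiplicativity gives $\nu_k\le\|R_k\|^2$, and hence
\begin{equation*}
\|R_{k+1}\|\le \nu_k\|R_k\|^2+(1+\nu_k)\|R_k\|^4\le 2\|R_k\|^4+\|R_k\|^6 .
\end{equation*}
Set $\varphi(t)=2t^3+t^5$, so that $\|R_{k+1}\|\le \varphi(\|R_k\|)\,\|R_k\|$. Whenever $\varphi(\sigma)<1$, induction gives $\|R_k\|\le\sigma$ for all $k$ and $\|R_k\|\to0$ monotonically. For $\sigma$ close to $1$ this crude bound is insufficient. There I would sharpen it by keeping $\nu_k$ in the form $\|R_k^2\|^2/\|R_k\|^2$ and using the factored form $R_{k+1}=R_k^2\bigl((1+\nu_k)R_k^2-\nu_k I\bigr)$, which gives
\begin{equation*}
\|R_{k+1}\|\le\|R_k^2\|\,\bigl(\nu_k+(1+\nu_k)\|R_k^2\|\bigr).
\end{equation*}
Alternatively, one can apply the estimate to two consecutive steps, since $\|R_k^{2}\|$ typically drops well below $\|R_k\|^2$. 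Since $A^{-1}-X_{k}=A^{-1}R_{k}$, the error satisfies $\|A^{-1}-X_k\|\le\|A^{-1}\|\,\|R_k\|\to0$, so $X_k\to A^{-1}$.

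\textbf{Step 3} is the order. With $E_k=A^{-1}-X_k$ we have $R_k=AE_k$ and $E_{k+1}=A^{-1}R_{k+1}$. The term $(1+\nu_k)R_k^4$ together with the bound on $\nu_k$ already yields at least a fourth-order estimate $\|E_{k+1}\|\le C\|E_k\|^4$. Reaching order six requires showing that the contribution $\nu_kR_k^2-\nu_kR_k^4$ does not spoil this. In $\|E_{k+1}\|$ it must be $O(\|E_k\|^6)$, and the leading $R_k^4$ term must be accounted for as well. I would expand $\nu_k$ asymptotically as $R_k\to0$, comparing $\|R_k^2\|^2$ with $\|R_k\|^2\,\|R_k\|^2$ via the spectral structure of $R_0$ (all iterates are polynomials in it). I would then collect the error equation $E_{k+1}=A^{-1}\bigl((1+\nu_k)(AE_k)^4-\nu_k(AE_k)^2\bigr)$ in powers of $E_k$.

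\textbf{The main obstacle} is this last step. Submultiplicativity alone only gives $\nu_k=O(\|R_k\|^2)$, so the term $\nu_kR_k^2$ is a priori only $O(\|R_k\|^4)$. Establishing order six for general $A$ needs a finer handle on the ratio $\|R_k^2\|^2/\|R_k\|^2$. If that cancellation cannot be obtained norm-wise in general, then the argument as planned only certifies order four, and the sixth-order claim would have to be checked separately, for instance by computing the computational order of convergence numerically.
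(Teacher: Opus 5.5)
Your proposal leaves two genuine gaps: convergence for $\sigma$ near $1$ in Step 2, and order six in Step 3. Neither can be closed, because both claims are false as stated.

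\textbf{Convergence.} Step 1 is correct and is exactly the paper's recursion $R_{k+1}=-\nu_kR_k^2+(1+\nu_k)R_k^4$. Your bound $\|R_{k+1}\|\le(2\|R_k\|^3+\|R_k\|^5)\|R_k\|$ is valid and slightly sharper than the paper's $3\|R_k\|^4$. However, your refined bound $\|R_k^2\|\bigl(\nu_k+(1+\nu_k)\|R_k^2\|\bigr)$ coincides with $2\|R_k\|^4+\|R_k\|^6$ whenever $\|R_k^2\|=\|R_k\|^2$. In that case the estimate is attained and the method really diverges. Take $n=1$, $A=1$, $X_0=1-0.9i$, with the Frobenius norm, i.e.\ the modulus. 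Then $R_0=0.9i$, $\nu_0=0.81$, and $R_1=2(0.9)^4+(0.9)^6\approx 1.84$. Since $R_1$ is real, $\nu_1=R_1^2$, and from then on $R_{k+1}=R_k^6\to\infty$. The real data $A=I_2$, $X_0=I_2-0.9J$ with $J=\bigl(\begin{smallmatrix}0&-1\\ 1&0\end{smallmatrix}\bigr)$ and the spectral norm diverge in the same way. So convergence for every $\sigma<1$ cannot be proved. What your argument does prove is convergence under $2\sigma^3+\sigma^5<1$, roughly $\sigma<0.73$. The paper's proof does not cover the stated range either: it concludes only under $3\sigma^3<1$, i.e.\ $\sigma<3^{-1/3}$, which contradicts its own hypothesis $\sigma<1$.

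\textbf{Order six.} Your suspicion in Step 3 is justified, and it is not a weakness of norm arguments. Write $R_{k+1}=R_k^2(R_k^2-\nu_kI)+\nu_kR_k^4$. Roughly, the first term is $O(\|R_k\|^6)$ only if $\nu_k$ matches the squared eigenvalues of $R_k$ to relative accuracy $O(\|R_k\|^2)$, and the iteration does not enforce this. Take $A=I_2$, $X_0=\mathrm{diag}(0.8,0.9)$ and the Frobenius norm, so $R_k=\mathrm{diag}(x_k,y_k)$ stays real. With $\eta_k=(x_k^2-y_k^2)/(x_k^2+y_k^2)$ one has exactly $x_k^2(x_k^2-\nu_k)=-y_k^2(y_k^2-\nu_k)=x_k^2y_k^2\eta_k$. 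Hence $\|R_{k+1}\|_F$ is bounded below by a constant times $|\eta_k|\,\|R_k\|_F^4$. Order six would therefore need $|\eta_k|=O(\|R_k\|_F^2)$. But once $\|R_k\|_F^2\ll|\eta_k|\ll1$, the relative gap evolves as $\eta_{k+1}\approx\|R_k\|_F^2/\eta_k$, so $|\eta_k|/\|R_k\|_F^2\to\infty$. The linear recursion for $(-\ln\|R_k\|_F,-\ln|\eta_k|)$ has characteristic polynomial $\lambda^2-3\lambda-6$, giving asymptotic order $(3+\sqrt{33})/2\approx 4.37$. Concretely, $\|R_1\|_F\approx 3.8\times10^{-4}$ and $\|R_2\|_F\approx1.5\times10^{-15}$, whereas $\|R_1\|_F^6\approx 2.9\times10^{-21}$.

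The paper's order argument does not get around this. Its $\epsilon_k$ is merely bounded, which is just your fourth-order estimate. The step ``taking $\epsilon=\|e_k\|^2$'' is illegitimate, because $\epsilon$ must be fixed before $k_0$ is chosen. The honest conclusion of your plan is convergence for $2\sigma^3+\sigma^5<1$ with order at least four. Order six holds only in special situations, for instance $R_0=rI$ with $r\in\mathbb{R}$, where $R_{k+1}=R_k^6$ exactly.
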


\begin{proof}
Let us define $R_k=I-AX_k$, and $R_k^Y=I-AY_k$, for $k=0,1,\ldots$. Then,
\begin{equation}\label{residuos}
R_k^Y=I-AY_k=I-AX_k(2I-AX_k)=(I-AX_k)^2=R_k^2,
\end{equation}
and therefore
\[
\nu_k=\frac{\| R_k^Y\|^2}{\|R_k\|^2} = \frac{\| R_k^Y\|^2}{\|R_k\|^2}  \leq  \frac{\| R_k\|^4}{\|R_k\|^2}=\|R_k\|^2.
\]

From the iterative expression \eqref{metodo_inversa}, it is deduced that
\begin{eqnarray}
    R_{k+1}&=& I-AY_{k}\left((2+\nu_k)I-(1+\nu_k)AY_{k}\right)\nonumber\\
            &=& I-(2+\nu_k)AY_k+(1+\nu_k)AY_kAY_k.
\end{eqnarray}
Taking into account that $AY_k=I-R_k^Y$, $R_{k+1}$ can be expressed in terms of $R_k^Y$ as follows
\begin{eqnarray}
    R_{k+1}&=& -(1+\nu_k)I+(2+\nu_k)R_k^Y+(1+\nu_k)(I-2R_k^Y+(R_k^Y)^2)\nonumber\\
            &=& -\nu_k R_k^Y+(1+\nu_k)(R_k^Y)^2\\
            &=& R_k^Y(-\nu_k I+(1+\nu_k)R_k^Y).\nonumber
\end{eqnarray}
By using the relation between both residuals \eqref{residuos}, it is clear that
\begin{equation}\label{eq_2.4}
R_{k+1}=-\nu_k R_k^2+(1+\nu_k)R_k^4.
\end{equation}
Taking norms and using the triangular inequality, the relation \eqref{residuos}, and the multiplicative properties of the norms, a bound of the residual $R_{k+1}$ can be found:
\begin{eqnarray}\label{recurrencia}
    \|R_{k+1}\|&=& \nu_k \|R_k^2\|+(1+\nu_k)\|R_k^4\|\nonumber\\
            &=& \frac{\| R_k^2\|^3}{\|R_k\|^2} +\left(1+\frac{\| R_k^2\|^2}{\|R_k\|^2}\right)\|R_k^4\|\\
            &\leq& 3\|R_k\|^4.\nonumber
\end{eqnarray}

By assuming that $R_0=\| I-AX_0 \|\leq \sigma<1$, \eqref{recurrencia} implies
\[
\|R_{1}\|\leq 3\|R_0\|^4 \leq 3 \sigma^3 \|R_0\|.
\]
The real root of $3 \sigma^3=1$ gives the upper bound of $\sigma$, $\sigma < \left(\frac{1}{3} \right)^{1/3}$. It can be proven by induction that $\|R_k\|\leq M \|R_{k-1}\|$, for all $k\in \mathbb{N}$, being $M=3\sigma^3<1$. Then,
\[
 \|R_{k+1}\|\leq M \|R_k\|\leq M^2 \|R_{k-1}\|\leq \cdots \leq M^{k+1} \|R_0\|.
\]
As $M<1$, when $k$ tends to infinity, $M^{k+1}$ tends to zero. Since $\|R_0\|$ is bounded, it is concluded that  $\|R_{k+1}\|$ tends to zero. So, $AX_k$ tends to the identity matrix $I$, and the convergence of $X_k$ to $A^{-1}$ is proven.

Let us now prove the order of convergence of the iterative scheme. We define the local error $e_k=A^{-1}-X_k$. It is easy to check that $Ae_k=R_k$. So, using \eqref{eq_2.4},
\begin{eqnarray}
    e_{k+1}&=&A^{-1}R_{k+1}\nonumber\\
        &=& A^{-1}\left( -\nu_k R_k^2+(1+\nu_k)R_k^4 \right)\\
        &=& -e_k\left( \nu_k Ae_k-(Ae_k)^3\right)+\nu_k e_k (Ae_k)^3.\nonumber
\end{eqnarray}

Let us remark that
\[
\nu_k Ae_k-(Ae_k)^3=-\nu_k R_k+R_k^3\leq \|R_k\|^2 R_k+R_k^3,
\]
that tends to zero with third-order of convergence (with fourth-order, as it is multiplied by $e_k$). So,
\begin{eqnarray}
   \| e_{k+1}\|&\leq&\|e_k\left( \nu_k Ae_k-(Ae_k)^3\right)\|+\|\nu_k e_k (Ae_k)^3\|\nonumber\\
        &=& \epsilon_k \|e_k\|^4+\|A\|^5\|e_k\|^6,
\end{eqnarray}
where $\epsilon_k$ tends to zero, Then, given $\epsilon>0$, there exists a $k_0\in \mathbf{N}$ such that for any $k\geq k_0$, $\epsilon_k\leq \epsilon$. Taking $\epsilon=\|e_k\|^2$, exist a $k_0'\in \mathbf{N}$ such that for any $k\geq k_0'$,
\[
\|e_{k+1}\|\leq \|e_k\|^6+\|A\|^5 \|e_k\|^6=(1+\|A\|^5)\|e_k\|^6.
\]
Therefore, the sixth-order of convergence of the method is proven.
\end{proof}

\section{Computational Efficiency Analysis}\label{sec:eff}

To evaluate the performance of the proposed method against standard algorithms, we use the Ostrowski efficiency index \cite{ostrowski1960solution, traub1964iterative}. The efficiency index $EI$ provides a theoretical balance between convergence speed and per-iteration computational cost. It is defined as:
\begin{equation}
    EI = p^{\frac{1}{M}},
\end{equation}
where $p$ represents the local order of convergence of the method, and $M$ denotes the number of matrix-matrix multiplications required per step, of order $O(n^3)$, being $n \times n$ the size of the matrix. Matrix-matrix products dominate the overall computational cost, meaning other operations such as matrix additions or scaling are considered negligible. The norm calculation is of order $O(n^2)$, in case of infinity or Frobenius norms, and higher in other cases. So, when large matrices are taken into account, the only cost to be considered is that of matrix-matrix multiplications.

In the standard Hyperpower family, achieving a $p$-th order of convergence requires exactly $M_{hp} = p $ matrix multiplications. Consequently, its efficiency index is given by:
\begin{equation}
    EI_{HPp} = p^{\frac{1}{p}}.
\end{equation}
As the order $p$ increases, $EI_{HPp}$ monotonically decreases. For instance, the classical second-order Newton-Schulz method ($p=2, M=2$) yields $EI_{NS} = 2^{1/2}\approx 1.4142 $, whereas a standard fourth- or sixth-order Hyperpower method ($p=4, M=4$,  or $p=6, M=6$, or $p=8, M=8$) holds NS one, $EI_{HP4} = 4^{1/4}= 2^{1/2} $, or drops to  $EI_{HP6} = 6^{1/6} \approx 1.348$, and $EI_{HP6} = 8^{1/8} \approx 1.2968$. This theoretical decline demonstrates that simply increasing the order without optimization degrades performance.

Our proposed algorithm breaks this limitation by reducing the cost to a lower bound, with $M_{CTM} = 4$. Therefore, the efficiency index of our method becomes:
\begin{equation}
    EI_{CTM} = 6^{\frac{1}{4}}\approx 1.5651.
\end{equation}
The efficiency indices for all these schemes are shown in Figure \ref{figura1}.
\begin{figure}[ht!!]
  \centering  \label{figura1}\includegraphics[width=.7\textwidth]{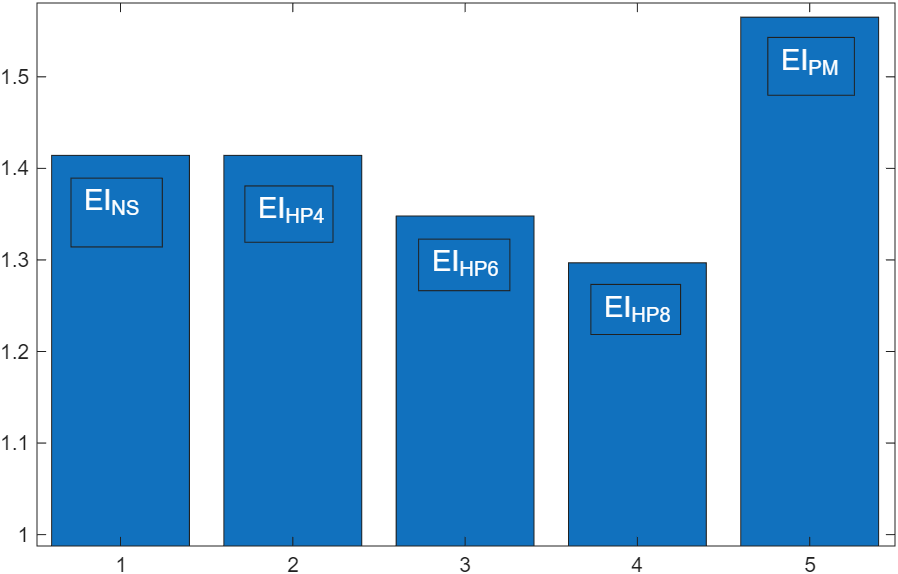}
  \caption{Efficiency indices of the proposed and known methods}
\end{figure}

This comparison shows that our formulation delivers a better trade-off between convergence rate and computational effort, establishing a higher theoretical efficiency index than Hyperpower schemes, which are, to our knowledge, the most efficient schemes to date.

\section{Stability analysis}\label{sec:stability}

Dynamical analysis is nowadays a standard approach for comparing iterative methods. Beyond measuring convergence speed, it reveals how a scheme behaves for different starting guesses. This also shows whether the method is stable and dependable. We briefly introduce the discrete-dynamics concepts used here. Readers seeking a deeper background may consult and \cite{Devaney}.

Assume $T:{\mathbb{R}}\rightarrow {\mathbb{R}}$ is a rational map built by running an iterative scheme on a low-degree polynomial or rational function $p(x)$. Starting from $x_{0}$, its orbit is the sequence
\begin{flalign*}
\left\lbrace x_{0},T(x_{0}),T^{2}(x_{0}),\ldots,T^{n}(x_{0}),\ldots\right\rbrace.
\end{flalign*}
A value $x^{*} \in { \mathbb{R}}$ is called $k$-periodic when applying $T$ repeatedly $k$ times returns to $x^{*}$, i.e., $T^k(x^{*})=x^{*}$. When $k=1$, we have a fixed point. The stability of a $k$-periodic cycle depends on the product of derivatives along the path. If $\lvert T'(x^{*})\cdot (T^2(x^{*}))' \cdots (T^{k-1}(x^{*}))'\rvert <1$, the cycle pulls nearby values toward it (attracting). If the product equals $0$, it strongly attracts (superattracting). A product greater than $1$ pushes values away (repulsive). A product equal to $1$ yields neutral or parabolic behavior. Furthermore, if a fixed point does not solve $p(x)=0$, we label it a strange fixed point.

A point $x_{c}$ is critical when the derivative vanishes, $T'(x_{c})=0$. When such a point is not a root of the original $p(x)$, it is called a free critical point.

The set of all starting values that eventually approach an attracting fixed or periodic point $x^*$ is the basin of attraction, written as
\begin{flalign*}
A(x^{*})=\{x_0\in\hat{ \mathbb{C}}:T^{n}(x_0)\rightarrow x^{*}, n\rightarrow\infty\}.
\end{flalign*}
A classic theorem by Fatou (\cite{Fatou}) and Julia (\cite{Julia}) connects these basins to critical points. Specifically, the immediate basin surrounding an attracting cycle must contain at least one critical point.

\begin{theorem}[Fatou-Julia] \label{theo0}
Let $T$ be a rational map. Every immediate basin around an attracting fixed or periodic point includes at least one critical point.
\end{theorem}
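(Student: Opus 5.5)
The plan is to prove the statement on the Riemann sphere $\hat{\mathbb{C}}$ by the classical linearization argument, and to reduce periodic cycles to fixed points. For a $k$-periodic attracting cycle, every point $x^*$ of the cycle is an attracting fixed point of $T^k$. By the chain rule, the critical points of $T^k$ are exactly the points $z$ with $T^j(z)$ critical for $T$ for some $0\le j<k$. Moreover, $T^j$ maps the immediate basin of $x^*$ under $T^k$ into the immediate basin of $T^j(x^*)$. Hence a critical point of $T^k$ lying in the immediate basin $U$ of $x^*$ produces a critical point of $T$ in the immediate basin of some point of the cycle. So it suffices to treat an attracting fixed point $z_0$ with multiplier $\lambda=T'(z_0)$, $|\lambda|<1$. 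If $\lambda=0$, then $z_0$ is itself a critical point lying in its own immediate basin $U$, and we are done. So we assume $0<|\lambda|<1$ and argue by contradiction, supposing that $U$, the connected component of $A(z_0)$ containing $z_0$, contains no critical point of $T$.

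The first step is Koenigs linearization. There is a local biholomorphism $\varphi$ near $z_0$ with $\varphi(z_0)=0$, $\varphi'(z_0)=1$ and $\varphi\circ T=\lambda\varphi$. Its inverse $\psi$ is defined on some disc $D_r=\{|w|<r\}$, maps into $U$, and satisfies $T(\psi(w))=\psi(\lambda w)$. The second step extends $\psi$ to $D_{r/|\lambda|^m}$ for every $m$ by induction. Since $T(U)=U$ and $T$ is a proper map, $T|_U:U\to U$ is a finite branched covering. Because $U$ contains no critical points, it has no branch points and is therefore an honest covering map. Given $\psi$ on $D_\rho$, the map $w\mapsto\psi(\lambda w)$ is defined on the simply connected disc $D_{\rho/|\lambda|}$. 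By the lifting property of coverings, it lifts through $T|_U$ to a holomorphic map $\tilde\psi:D_{\rho/|\lambda|}\to U$ with $T\circ\tilde\psi(w)=\psi(\lambda w)$. We normalize the lift by $\tilde\psi(0)=z_0$ and fix the branch so that $\tilde\psi=\psi$ near $0$; uniqueness of lifts then gives $\tilde\psi=\psi$ on $D_\rho$. Iterating yields an entire, nonconstant map $\psi:\mathbb{C}\to U$.

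The final step is the contradiction. The complement of $U$ in $\hat{\mathbb{C}}$ contains the boundary of the basin, which lies in the Julia set. For a rational map of degree at least $2$, the Julia set is infinite, so $\psi$ omits at least three points of $\hat{\mathbb{C}}$. After a Möbius change sending one omitted point to $\infty$, $\psi$ becomes an entire function omitting two finite values, and Picard's little theorem forces $\psi$ to be constant. This contradicts $\psi'(0)=1/\varphi'(z_0)=1$. Hence $U$ must contain a critical point.

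I expect the main obstacle to be the second step, namely justifying that $T|_U$ is a covering of $U$ onto itself and that the successive lifts are consistent. This requires checking that $T(U)=U$ (the image is a connected subset of the basin containing $z_0$, and properness gives surjectivity) and that the absence of critical points in $U$ turns the proper map into a covering. An alternative I would try if this becomes delicate is to work with the local inverse branch $S$ of $T$ fixing $z_0$, continue it analytically along paths in $U$, and use the monodromy theorem on discs in the linearizing coordinate. That route is equivalent but avoids quoting covering-space theory. The degenerate case $\deg T=1$, where the Julia set is finite, must be excluded or treated separately.
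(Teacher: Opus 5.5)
The paper gives no proof of this statement. It quotes the result as the classical theorem of Fatou and Julia and justifies it only by citation, so your argument supplies a proof rather than competing with one. What you propose is the standard modern proof: Koenigs linearization, extension of $\psi=\varphi^{-1}$ to an entire map by lifting through the unbranched covering $T|_U:U\to U$, and Picard's theorem. It is correct.

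The point you flag as delicate closes quickly. The basin $A(z_0)$ is completely invariant, so the component of $T^{-1}(U)$ containing $U$ is a connected subset of $A(z_0)$ containing $z_0$, and hence equals $U$. Therefore $T|_U:U\to U$ is proper. A proper local homeomorphism of the connected surface $U$ to itself has open and closed image, so it is surjective and a covering.

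Compared with the bare citation, your version makes explicit three hypotheses that the paper's phrasing leaves implicit:
\begin{itemize}
\item The theorem lives on $\hat{\mathbb{C}}$, not on $\overline{\mathbb{R}}$ where the paper works. The critical point it produces may be non-real.
\item The restriction $\deg T\ge 2$ is a necessary hypothesis, not a degenerate case to be treated separately. For instance, $T(z)=z/2$ has immediate basin $\mathbb{C}$ around the attracting fixed point $0$ and no critical points at all.
\item For cycles, the conclusion must refer to the union of the components around the points of the cycle, exactly as your reduction gives. For $T(z)=z^2-1$, the component around $-1$ of the superattracting cycle $\{0,-1\}$ contains no critical point of $T$.
\end{itemize}
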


Next, we study the dynamics of the scheme \eqref{metodo_SNL}. To test how the method approximates a matrix inverse, we apply it to $f(x)=\dfrac{1}{x}-1$ instead of the usual quadratic test function.

Let $T$ act on the extended real line $D\subset\overline{\mathbb{R}}$, with $\overline{\mathbb{R}}=[-\infty,+\infty]$. Substituting $f(x)=\dfrac{1}{x}-1$ into \eqref{metodo_SNL} yields the rational expression
\begin{equation}\label{R}
T(x)= -((-2 + x) x (3 - 6 x + 7 x^2 - 4 x^3 + x^4)).
\end{equation}
The next result identifies all real fixed points of this map.

\begin{theorem}\label{theo5}
Consider the operator $T:D\subset \overline{\mathbb{R}}\to \overline{\mathbb{R}}$ given by \eqref{R}. The true solution $x=1$ is a superattracting fixed point. Additionally, $x=0$ and $x=\infty$ act as strange fixed points; the former repels nearby values, while the latter strongly attracts them.
\end{theorem}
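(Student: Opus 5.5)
The plan is to first put the operator \eqref{R} into a closed form that makes both fixed points and multipliers transparent. Since $T$ is a polynomial of degree six with leading coefficient $-1$, and the underlying scheme has order six, I expect $1-T(x)$ to be a pure sixth power of $1-x$. Concretely, I will expand $-(x-2)x(x^4-4x^3+7x^2-6x+3)$ and compare it with $1-(1-x)^6$. A quick consistency check supports this identity: both expressions agree at $x=0$ (value $0$), at $x=1$ (value $1$) and at $x=2$ (value $0$). So the first step is to establish
\[
T(x)=1-(1-x)^6 .
\]
This is a routine expansion and I do not expect difficulty there.

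With this form, the finite fixed points come from the substitution $u=1-x$. The equation $T(x)=x$ becomes $u^6=u$, that is $u(u^5-1)=0$. Over the reals the only solutions are $u=0$ and $u=1$, which give exactly $x=1$ and $x=0$.

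Differentiating gives $T'(x)=6(1-x)^5$. Hence $T'(1)=0$, so $x=1$ is superattracting; in fact $T'(1)=\cdots=T^{(5)}(1)=0$, consistent with order six. At the other fixed point $|T'(0)|=6>1$, so $x=0$ is a repulsive strange fixed point.

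For $x=\infty$, I will use the standard conjugation $S(z)=1/T(1/z)$ on $\overline{\mathbb{R}}$. Because $T$ is a polynomial of degree six,
\[
S(z)=\frac{z^6}{z^6-(z-1)^6},
\]
whose denominator equals $1$ at $z=0$. Therefore $S(0)=0$ and $S'(0)=0$, since $S(z)=O(z^6)$. So $\infty$ is a fixed point of $T$ and it is superattracting; it is also strange, because it does not solve $1/x-1=0$.

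The only step requiring genuine care is verifying the factorization $T(x)=1-(1-x)^6$. If the coefficients in \eqref{R} turned out not to match, I would fall back on factoring $T(x)-x=x\,Q(x)$ directly and showing that the degree-five factor $Q$ has $x=1$ as its only real root. I would do this via Descartes' rule, or by writing $Q$ in terms of $u=1-x$. The multipliers would then follow from direct evaluation of $T'$.
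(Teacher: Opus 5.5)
Your proof is correct and follows essentially the paper's route: solve $T(x)=x$, evaluate $T'(x)=6(1-x)^5$ at $0$ and $1$, and use the conjugate $1/T(1/z)$ at infinity. Your identity $T(x)=1-(1-x)^6$ (which checks out) additionally justifies the paper's bare assertion that $0$ and $1$ are the only real fixed points. One harmless slip: the denominator $z^6-(z-1)^6$ equals $-1$, not $1$, at $z=0$, but only its nonvanishing is needed for $S(0)=S'(0)=0$.
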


\begin{proof}
Fixed points satisfy $T(x)=x$, which is equivalent to finding the roots of
\begin{equation*}
T(x)=  -((-2 + x) x (3 - 6 x + 7 x^2 - 4 x^3 + x^4))-x =0.
\end{equation*}
Solving this equation yields $x=0$ and $x=1$ as the only real solutions; hence, these are the fixed points. By examining the auxiliary map $In(x)=\dfrac{1}{T(1/x)}$, we see $In(0)=0$, which confirms that $x=\infty$ is also a fixed point.

The point $x=1$ is superattracting ($T'(1)=0$), as it solves $f(x)=1/x-1$, and the scheme \eqref{metodo_SNL} converges with order higher than two. To check the stability of the strange point $x=0$, we differentiate $T(x)$:
\begin{equation*}
T'\left( x \right) =-6 (-1 + x)^5.
\end{equation*}
Evaluating the magnitude at zero gives $\left|T'\left( 0\right) \right|=6>1$, so $x=0$ is a repulsor. A quick check shows $\left|In'\left( 0\right) \right|=0$, meaning $x=\infty$ is superattracting.
\end{proof}

Setting $T'(x)=0$ reveals that $x=1$ is a critical point, and $x=\infty$ serves as the only free critical point, and the method either converges to $x=1$ or diverges. This is due to Julia-Fatu Theorem, as there is no other critical point that creates any other basin of attraction.

To visualize the stability more clearly, we plot a dynamical line (a visualization tool first used in \cite{RD} for methods with memory). Figure \ref{figura2} displays this plot. It was created in Matlab R2024b using a $500 \times 500$ grid and a limit of $50$ steps. Each grid point starts the iteration at a different value.

\begin{figure}[ht!!!]
\centering
\includegraphics[width=0.7\textwidth]{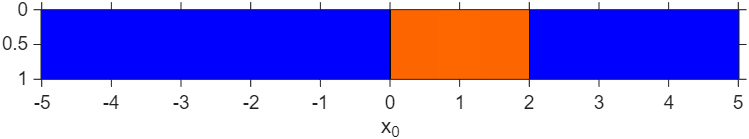}
\caption{Dynamical line for $R(x)$}
\label{figura2}
\end{figure}

If the iteration reaches the true root $x=1$ within $50$ steps and the error drops below $10^{-3}$, the grid point is marked orange. If it diverges toward infinity, it is marked blue. Figure \ref{figura2} clearly shows two basins of attraction: the orange area corresponds to convergence toward the solution $x=1$, and the blue area marks divergence.

\section{Experimental results}
\label{sec:experiments}

These numerical tests are performed on an Intel(R) Core(TM) i7-9700 CPU @3.00 GHz, with 32 GB of RAM, using MATLAB R2024b, with double-precision arithmetic and $\|I-AX_k\|<10^{-5}$ as a stopping criterion. The methods used for comparison with the proposed scheme CTM are the Newton-Schulz procedure \eqref{eq_NS} and the Hyperpower methods \eqref{eq_HP} of orders $p=4$, $p=6$, and $p=8$ (denoted by HP4, HP6, and HP8, respectively). The initial estimation used for each scheme is $X_{0}=\dfrac{A^{T}}{\| A \|^{2}}$. In all the calculations, the norm $\|\cdot\|$ used is the Frobenius norm. Moreover, we present in the tables the number of iterations needed to get convergence, the last calculated residual $\|I-AX_{k+1}\|$, and the numerical estimation $coc$ of the order of convergence $p$ given by Jay in \cite{jay2001note},
\[
p\approx coc=\frac{\ln\left(\frac{\|I-AX_{k+1}\|}{\|I-AX_k\|}\right)}{\ln\left(\frac{\|I-AX_{k}\|}{\|I-AX_{k-1}\|}\right)}.
\]
Let us remark that coc is a vector defined using the residuals obtained during the iterative process. The value appearing in the table is the last stable one, when it is stabilized. If it is not stable, the symbol '-' is used in the table. %%%%%%%%%%%%%%%%%%%%%%%%

Moreover, the execution time is calculated using the commands \texttt{tic}-- \texttt{toc}, as the mean of $100$ consecutive executions of the code. It appears in the table as Time, expressed in seconds.

Firstly, we work with a random $300\times 300$ matrix (defined by using the command \texttt{rand} of Matlab), and its inverse is estimated using all the iterative schemes mentioned. The obtained results are shown in Table  \ref{tabla2}. %%%%%%%%%%%%%%%%

% \begin{table}[ht!!]
% \centering
% \caption{Comparison of iterative methods for inverse estimation of a random $A\in \mathbb{C}^{300\times 300}$ using $\|\cdot\|_\infty$ norm}\label{tabla1}
% \begin{tabular}{lcccc}
% \hline
% Method & Iter & $\|I-AX_{k+1}\|$ & $coc$  & Time (s)\\
% \hline
% NS & 32 & $8.14\times 10^{-06}$ & 2.00  & 0.01662 \\
% HP4 & 17 & $2.91\times 10^{-11}$ & 3.92  & 0.02066 \\
% HP6 & 13 & $6.86\times 10^{-06}$ & 3.78  & 0.02538 \\
% HP8 & 12 & $6.39\times 10^{-13}$ & 6.62  & 0.03788 \\
% PM  & 14 & $6.48\times 10^{-13}$ & 5.29  & 0.01394 \\
% \hline
% \end{tabular}
% \end{table}

\begin{table}[ht!!]
\centering
\caption{Comparison of iterative methods for inverse estimation of a random $A\in \mathbb{C}^{300\times 300}$}\label{tabla2} %%%%%%%%%%%%%%%%%%%%%%%%%%%%%%
\begin{tabular}{lcccc}
\hline
Method & Iter & $\|I-AX_{k+1}\|$ & $coc$  & Time (s)\\
\hline
NS & 33 & $1.22\times 10^{-10}$ & 2.00  & 0.01513 \\
HP4 & 17 & $1.22\times 10^{-10}$ & 3.46  & 0.01760 \\
HP6 & 13 & $9.44\times 10^{-06}$ & 3.61  & 0.02309 \\
HP8 & 12 & $6.68\times 10^{-13}$ & 5.71  & 0.03607 \\
CTM  & 14 & $9.20\times 10^{-12}$ & 4.40  & 0.01241 \\
\hline
\end{tabular}
\end{table}

In Table \ref{tabla2}, the lowest mean convergence time is achieved by the proposed method CTM, even compared with higher-order schemes such as HP8. This is a significant result due to the lowest number of matrix-matrix products per iteration in the CTM scheme.

Now, we test the performance of the methods when estimating the inverse of a random matrix $B$ of size $1000\times 1000$. The obtained results are presented in Table \ref{tabla4}.

% \begin{table}[ht!!]
% \centering
% \caption{Comparison of iterative methods for inverse estimation of a random $B\in \mathbb{C}^{1000\times 1000}$ using $\|\cdot\|_\infty$ norm}\label{tabla3}
% \begin{tabular}{lcccc}
% \hline
% Method & Iter & $\|I-AX_{k+1}\|$ & $coc$ & Time (s)\\
% \hline
% NS  & 38 & $2.74\times 10^{-07}$ & 2.00  & 0.73910 \\
% HP4 & 20 & $2.33\times 10^{-12}$ & 4.00  & 0.90894 \\
% HP6 & 16 & $2.35\times 10^{-12}$ & 6.00  & 1.33252 \\
% HP8 & 14 & $2.38\times 10^{-12}$ & 8.00  & 1.91229 \\
% PM  & 16 & $2.40\times 10^{-12}$ & 6.00  & 0.66368 \\
% \hline
% \end{tabular}
% \end{table}

\begin{table}[ht!!]
\centering
\caption{Comparison of iterative methods for inverse estimation of a random $A\in \mathbb{C}^{1000\times 1000}$ }\label{tabla4} %%%%%%%%%%%%%%%%%%%%%%%%%%%%%%%%%%%%%%%%%%%%%%
\begin{tabular}{lcccc}
\hline
Method & Iter & $\|I-AX_{k+1}\|$ & $coc$ &  Time (s)\\
\hline
NS  & 38 & $1.21\times 10^{-06}$ & 2.00  & 0.79514 \\
HP4 & 20 & $3.05\times 10^{-12}$ & 4.00  & 0.86381 \\
HP6 & 16 & $2.68\times 10^{-12}$ & 6.00  & 1.40004 \\
HP8 & 14 & $2.66\times 10^{-12}$ & 7.91  & 1.81183 \\
CTM  & 16 & $6.90\times 10^{-10}$ & 5.98  & 0.70140 \\
\hline
\end{tabular}
\end{table}

The results appearing in Table \ref{tabla4} show a similar performance, with the lowest mean computational time for CTM scheme, and small differences in the residuals, being the number of iterations consistent with the order of convergence of the methods. In this case, the estimation of the methods' order of convergence given by coc is more precise.

Table \ref{tabla6} shows the results obtained by approximating the inverse of a benchmark set of Lehmer, Riemann, Hankel, and Leslie matrices, implemented in MATLAB inside \texttt{gallery}, with size $500\times 500$, in all cases. These example matrices cover a wide range of cases: Lehmer matrices $A=(a_{i,j})$, whose entries are calculated as
\[
a_{i,j}=\left\{
\begin{array}{cc}
  i/j, & j\geq i, \\
  j/i, & j<i,
\end{array}
\right.
\]
are symmetric, invertible matrices whose trace is equal to the number of rows/coulmns of the matrix, and they are used in signal processing. Riemann matrices are complex symmetric invertible matrices, defined as
\[
a_{i,j}=\left\{
\begin{array}{cc}
  i-1, & \mbox{ if } i \mbox{ is a divisor of } j, \\
  -1, & otherwise.
\end{array}
\right.
\]
Hankel matrices are also symmetric, whose entries satisfy $a_{i,j}=a_{i+k,j-k}$, for $k=0,1,\ldots,j-1$, if $i\leq j$. Hilbert matrices are particular cases of Hankel matrices. They are defined in Matlab as
\[
a_{i,j}=\frac{0.5}{n-i-j+1.5},
\]
where $n$ is the size of the matrix. Finally, Leslie matrix appears in populations models, and its shape as $n\times n$ matrix is
\[
\begin{pmatrix}
  f_1 & f_2 & f_3 & \cdots & f_{n-1} & f_n \\
  s_1 & 0 & 0 & \cdots & 0 & 0 \\
  0 & s_2 & 0 & \cdots & 0 & 0 \\
  \vdots & \vdots & \vdots &  & \vdots & \vdots \\
  0 & 0 & 0 & \cdots & s_{n-1} & 0
\end{pmatrix}.
\]
It is an invertible matrix if $f_n\neq 0$ and $s_i\neq 0$ for all $i=1,2,\ldots,n-1$. In Matlab, all the entries $f_i$ and $s_i$ are defined as $1$.

\begin{table}[ht!!]
\centering
\caption{Comparison of iterative methods for inverse estimation of $500\times 500$ Lehmer, Riemann, Hankel, and Leslie matrices}\label{tabla6} %%%%%%%%%%%%%%%%%%%%%%%%%%%%%%%%%%%%%%%%%%%5
\begin{tabular}{lccccc}
\hline
Method & Matrix & Iter & $\|I-AX_{k+1}\|$ & $coc$ &  Time (s)\\
\hline
NS  & Lehmer & 41 & $1.34\times 10^{-06}$ & 1.82  &  0.12605 \\
HP4 & Lehmer & 21 & $1.27\times 10^{-06}$ & 3.47  &  0.13393 \\
HP6 & Lehmer & 17 & $1.61\times 10^{-11}$ &    -  &  0.19801 \\
HP8 & Lehmer & 15 & $1.44\times 10^{-11}$ & -     &  0.28908 \\
CTM  & Lehmer & 17 & $6.60\times 10^{-07}$ & -     &  0.09994 \\
\hline %\hline
NS  & Riemann & 36 & $2.22\times 10^{-09}$ & 2.00  & 0.11212 \\
HP4 & Riemann & 19 & $5.22\times 10^{-13}$ & 4.00  & 0.12157 \\
HP6 & Riemann & 15 & $1.62\times 10^{-12}$ & 6.00  & 0.17308 \\
HP8 & Riemann & 13 & $1.56\times 10^{-12}$ & 8.00  & 0.24414 \\
CTM  & Riemann & 15 & $2.68\times 10^{-10}$ & 6.00  & 0.08818 \\
\hline %\hline
NS  & Hankel & 18 & $4.92\times 10^{-08}$ & 2.00  & 0.05427 \\
HP4 & Hankel & 10 & $2.08\times 10^{-14}$ & 4.10  & 0.06238 \\
HP6 & Hankel & 8  & $1.40\times 10^{-14}$ & -     & 0.08733 \\
HP8 & Hankel & 7  & $1.46\times 10^{-14}$ & -     & 0.12175 \\
CTM  & Hankel & 8  & $1.39\times 10^{-09}$ & -     & 0.04623 \\
\hline %\hline
NS  & Leslie & 24 & $5.26\times 10^{-08}$ & 2.00  & 0.07523 \\
HP4 & Leslie & 13 & $1.15\times 10^{-12}$ & 4.00  & 0.08160 \\
HP6 & Leslie & 10 & $1.80\times 10^{-09}$ & 6.00  & 0.11076 \\
HP8 & Leslie & 9  & $2.41\times 10^{-13}$ & 8.00  & 0.15942 \\
CTM  & Leslie & 10 & $5.58\times 10^{-06}$ & 6.00  & 0.05717 \\
\hline %\hline
\end{tabular}
\end{table}

From Table \ref{tabla6}, we deduce that HP8 method requires the fewest iterations to converge, whereas the NS method is the slowest across all test matrices, as it can be inferred by their order of convergence. The CTM method matches the iteration count of HP6. However, in terms of execution time, CTM is the fastest method for all matrices because it has a lower computational cost per iteration. Conversely, higher-order methods like HP6 and HP8 require more CPU time due to the expensive matrix multiplications. Regarding the precision, high-order HP methods achieve the highest accuracy, reducing the residual error down to $10^{-14}$. The CTM method provides an intermediate level of numerical accuracy.

\subsection{Application to digital image restoration}

This section describes the numerical framework used to evaluate the performance of high-order iterative methods for matrix inversion, focusing on the realistic scenario of digital image restoration to remove motion or optical blur. In digital image processing, the horizontal degradation of a sharp two-dimensional image matrix due to optical blurring can be modeled as a matrix linear system:
\[
Y = X A^T + V,
\]
where $Y \in \mathbb{R}^{n \times n}$ represents the observed blurry image matrix, $A \in \mathbb{R}^{n \times n}$ is the blurring operator, $X \in \mathbb{R}^{n \times n}$ is the unknown sharp image matrix to be recovered, and $V \in \mathbb{R}^{n \times n}$ is the additive noise matrix. In this application, we consider $n=200$.

Directly solving this system by computing $A^{-1}$ is not feasible in practice because the matrix $A$ is highly ill-conditioned, meaning its singular values decay rapidly toward zero. As a consequence, direct inversion amplifies the noise matrix $V$, resulting in severely corrupted restorations. To overcome this limitation, we apply Tikhonov regularization, which stabilizes the inversion by solving a penalized least-squares problem, leading to the regularized system matrix $A_{\text{reg}} = A^T A + \lambda I$, where $\lambda > 0$ is the regularization parameter and $I$ is the $n \times n$ identity matrix. The matrix $A_{\text{reg}}$ is symmetric and positive definite (SPD), which ensures that all its eigenvalues are real and strictly positive, establishing a well-defined domain for iterative convergence.

To put this approach into practice, our experiment uses a specific matrix $X = X_{\text{circ}} \in \mathbb{R}^{n \times n}$ representing a Siemens-like target of concentric rings designed to test resolution limits. The construction of this matrix requires establishing a continuous spatial coordinate system over a grid. We define a symmetric spatial meshgrid spanning from $-10$ to $10$ in both dimensions, mapping each discrete pixel index $(i,j)$ to a continuous coordinate $(u_i, v_j)$. For every coordinate pair, the Euclidean distance from the origin is calculated as $r_{i,j} = \sqrt{u_i^2 + v_j^2}$. The image matrix values are then generated by evaluating a rapidly oscillating sinusoidal function governed by the squared radius, expressed as $S_{i,j} = \sin(r_{i,j}^2)$. To enforce maximum contrast and a distinct binary structure, a logical threshold is applied such that $X_{\text{circ}, i,j} = 1$ if $S_{i,j} > 0$ and $X_{\text{circ}, i,j} = 0$ otherwise. Because the squared radius compresses the wavelength as $r$ approaches the origin, the spacing between the concentric rings decreases rapidly toward the center, representing very high spatial frequencies. This image appears in Figure \ref{original_circ}.
\begin{figure}[ht!!!]
  \centering
  \includegraphics[width=.7\textwidth]{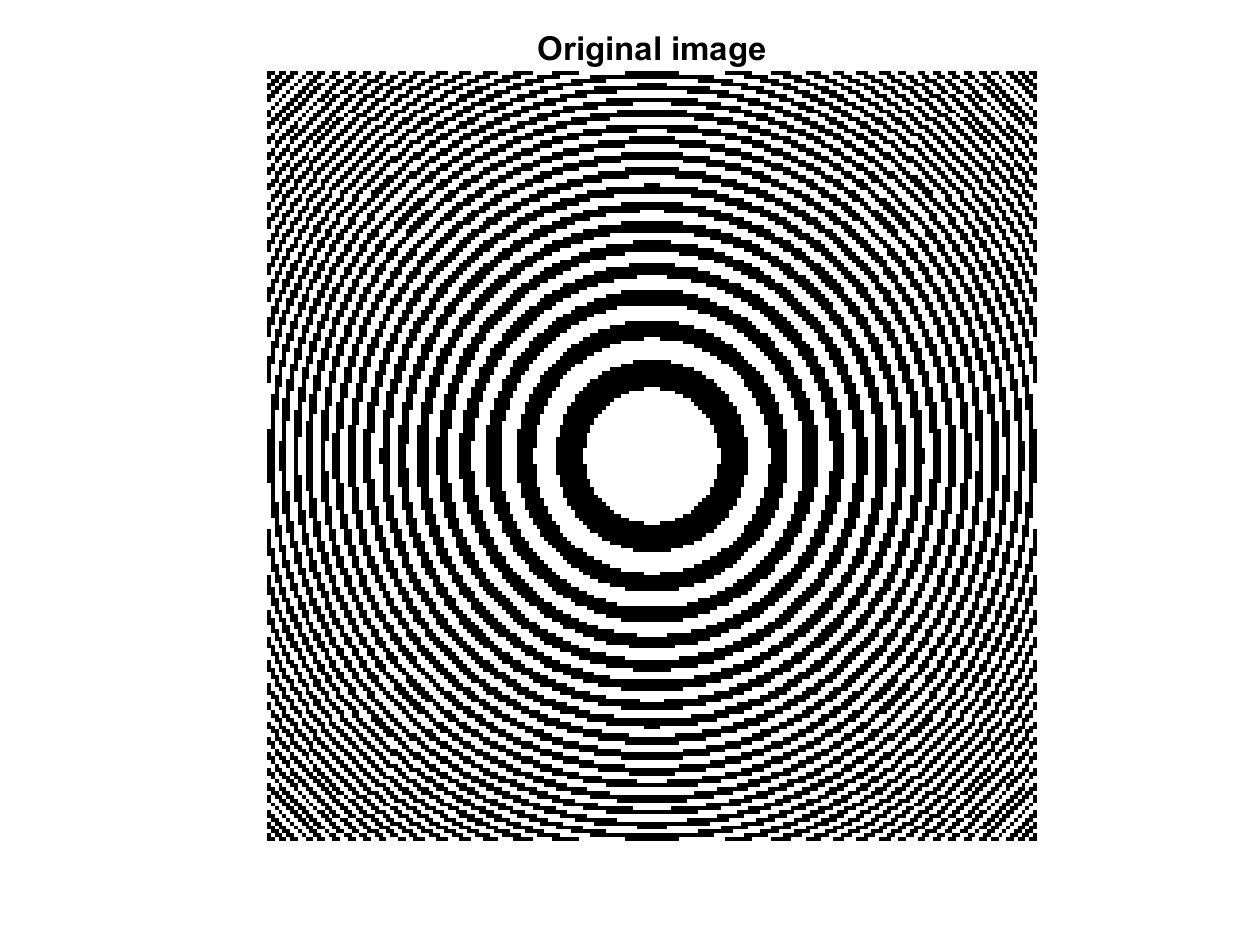}
  \caption{Original clean image without blurring}\label{original_circ}
\end{figure}

The choice of the regularization parameter $\lambda$ plays a critical role in the final matrix multiplication and determines the visual fidelity of the restored image. From a numerical perspective, the original blurring matrix $A$ is inherently ill-conditioned, meaning its smallest singular values are clustered arbitrarily close to zero. Without regularization ($\lambda = 0$), the iterative method would attempt to approximate the inverse of $A^T A$, causing the inverted small singular values to act as massive amplification factors. The insertion of the Tikhonov penalty term $\lambda I$ addresses this instability by artificially shifting the entire eigenspectrum of the system matrix upward by a factor of $\lambda$. This bound guarantees that the maximum noise amplification is strictly limited to $1/\lambda$, shielding the computation from high error propagation. Furthermore, this regularization transforms $A_{\text{reg}}$ into a strictly well-conditioned, symmetric positive definite operator and ensures that the initial residual matrix satisfies the strict spectral radius condition:
\[
\rho(I - A_{\text{reg}} X_0) < 1.
\]

To simulate the physical process of optical blurring in a controlled numerical environment, the operator $A \in \mathbb{R}^{n \times n}$ is constructed as a symmetric, structured Toeplitz matrix representing a space-invariant blur where every pixel is affected by the same distortion pattern. We generate a discrete Gaussian blur kernel vector $c \in \mathbb{R}^n$, which describes how light spreads from a single point, where each element is calculated as:
\[
c_i = \exp(-(i-1)^2 / (2\sigma^2)), \quad i = 1, 2, \ldots, n,
\]
with $\sigma=5$ controlling the width and severity of the blur. Using this vector as the first column and row, the full blurring matrix is assembled such that $A_{i,j} = c_{|i-j|+1}$, resulting in a banded matrix where the highest weights are concentrated along the main diagonal. The deterministic degradation of the two-dimensional digital image $X$ is executed by applying this operator row by row via the transformation $Y_{\text{blur}} = (A X^T)^T$, which is algebraically identical to $X A^T$. Once the perturbation matrix $V$ is added, this process mathematically blends the intensity of each pixel with its neighboring pixels along the same horizontal line, effectively erasing sharp transitions and high-frequency details. Reconstructing these inner details near the center requires a significantly higher level of numerical accuracy from the approximated inverse matrix.

Let $X_0 \in \mathbb{R}^{n \times n}$ be an initial approximation of $A_{\text{reg}}^{-1}$. The convergence of the algorithm depends strictly on the condition that the spectral radius satisfies $\rho(I - A_{\text{reg}} X_0) < 1$, which is guaranteed for the SPD matrix $A_{\text{reg}}$ by initializing the algorithm with:
\[
X_0 = \dfrac{A_{\text{reg}}^T}{ \|A_{\text{reg}}\|^2}.
\]
At each iteration $k$, the residual matrix is computed as $R_k = I - A_{\text{reg}} X_k$, and the iterative method updates the inverse approximation. Once the sequence converges to a final approximation $X_{\text{final}} \approx A_{\text{reg}}^{-1}$, the restored image matrix is obtained by computing:
\[
X_{\text{restored}} = Y A X_{\text{final}}.
\]
In this experiment, we force the iterative methods to make a maximum number of 10 iterations and calculate the residual at the last iteration, as well as the time required for each method. In this way, we compare the performance of the schemes in a limited framework where the most efficient methods must show convergence in small computational time.

Let us remark now the reason behind the preference for high-order iterative methods over classical direct factorization techniques, such as LU or Cholesky decomposition. While direct solvers are highly robust for small systems, they rely on sequential elimination and substitution phases that are inherently difficult to distribute across modern concurrent computing hardware. In contrast, the update engine of the iterative methods, such as Hyperpower schemes and our proposed method, relies strictly on successive matrix-matrix multiplications. This purely algebraic formulation maps perfectly onto the highly parallel architecture of modern Graphics Units (GPUs) and specialized tensor cores, enabling massive acceleration that direct solvers cannot achieve. Furthermore, direct factorization algorithms suffer from the structural phenomenon known as fill-in, which populates originally empty entries with non-zero values, thereby expanding the memory footprint and breaking the banded efficiency of the Toeplitz system.

In Figure \ref{recuperadas}, we observe the blurred images and the different recovered images by the different iterative methods.
\begin{figure}[ht!!!]
  \centering
  \includegraphics[width=1\textwidth]{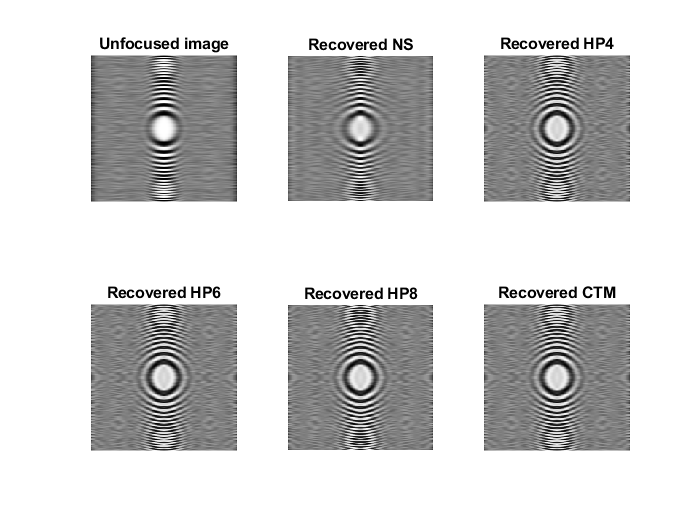}
  \caption{Blurred and recovered images, using different iterative schemes}\label{recuperadas}
\end{figure}
We observe that methods with order higher than two, succeed to recover enough the shape of the original image, without remarkable differences among them, from the graphical point of view. However, there are numerical differences to be noticed. In Table \ref{residuos_circ}, we can see the last residual and the execution time of all the iterative methods used, after the maximum of 10 iterations. We notice that NS and HP4 methods have not converged, and the lowest time among the converging schemes corresponds to the proposed method CTM.
\begin{table}[ht!!!]
  \centering
  \caption{Last residual and execution time for all the methods in image restoration problem}\label{residuos_circ}
  \begin{tabular}{lcc}
\hline
Method &  $\|I-AX_{k+1}\|_F$  & Time (s)\\
\hline
NS   & $13.360$              &  0.0049 \\
HP4  & $9.1654$              &  0.0089 \\
HP6  & $2.55\times 10^{-05}$ &  0.0118 \\
HP8  & $1.93\times 10^{-13}$ &  0.0175 \\
CTM   & $1.12\times 10^{-04}$ &  0.0086 \\
\hline
\end{tabular}
\end{table}

\section{Conclusions}
\label{sec:conclusions}

In this manuscript, a sixth-order method for approximating matrix inverses using only four matrix products has been designed, for the first time. This involves an improvement of $50\%$ in the order of convergence, for the same order, respect the sixth-order Hyperpower scheme. In the comparison with existing methods, it has proven to be efficient, stable and with good numerical results, improving the computational time needed by Hyperpower methods of orders 4, 6 and 8. Also tests on image processing provide very competent behavior. This kind of methods, using scalar norm-type acceleration parameters are the first in a new class of iterative methods that promise to expand the applicability of iterative methods on problems related with inverse matrix estimation.

%\section*{Acknowledgments}
%We would like to acknowledge the assistance of volunteers in putting together this example manuscript and supplement.

\bibliographystyle{siamplain}
\bibliography{references}

\end{document}